\pgfplotsset{width=10cm,compat=1.9}
\providecommand{\U}[1]{\protect\rule{.1in}{.1in}}
\newtheorem{theorem}{Theorem}[section]
\newtheorem{corollary}[theorem]{Corollary}
\newtheorem{definition}[theorem]{Definition}
\newtheorem{notation}[theorem]{Notation}
\newtheorem{proposition}[theorem]{Proposition}
\def\N{\mathbb{N}}
\title{A verification of Wilf's conjecture up to genus 100}
\thanks{The first author was partially supported by CMUP, a member of LASI, which is financed by national funds through FCT – Fundação para a Ciência e a Tecnologia, I.P., under the projects with reference UIDB/00144/2020 and UIDP/00144/2020.\\
  The authors gratefully acknowledge support from ULCO for visits of the first author to Calais in May 2019 and May 2023.}
\author{M. Delgado, S. Eliahou and J. Fromentin}
\date{\today}
\begin{document}
\keywords{Numerical semigroup; genus}

\subjclass[2010]{20M14}

\begin{abstract} For a numerical semigroup $S \subseteq \mathbb{N}$, let $m,e,c,g$ denote its multiplicity, embedding dimension, conductor and genus, respectively. Wilf's conjecture (1978) states that $e(c-g) \ge c$. As of 2023, Wilf's conjecture has been verified by computer up to genus $g \le 66$. In this paper, we extend the verification of Wilf's conjecture up to genus $g \le 100$. This is achieved by combining three main ingredients: (1) a theorem in 2020 settling Wilf's conjecture in the case $e \ge m/3$, (2) an efficient trimming of the tree $\mathcal{T}$ of numerical groups identifying and cutting out irrelevant subtrees, and (3) the implementation of a fast parallelized algorithm to construct the tree $\mathcal{T}$ up to a given genus. We further push the verification of Wilf's conjecture up to genus $120$ in the particular case where $m$ divides $c$. Finally, we unlock three previously unknown values of the number $n_g$ of numerical semigroups of genus $g$, namely for $g=73,74,75$.  
\end{abstract}

\maketitle

\section{Introduction}\label{sec introduction}

A \emph{numerical semigroup} is a cofinite submonoid $S$ of $\mathbb{N}$, i.e. a subset containing $0$, stable under addition and with finite complement $\mathbb{N} \setminus S$. Equivalently, it is a set of the form $S=\langle a_1,\dots,a_n \rangle = \mathbb{N}a_1+\cdots+\mathbb{N}a_n$ where $a_1,\dots,a_n$ are positive integers with $\gcd(a_1,\dots,a_n)=1$, called \emph{generators} of $S$. The least such $n$ is usually denoted $e$ and called the \emph{embedding dimension} of $S$, see below.

Let $S$ be a numerical semigroup and $S^*=S \setminus \{0\}$. A \emph{primitive element} of $S$ is an element $a \in S^*\setminus (S^*+S^*)$, i.e. a nonzero element of $S$ which is not the sum of two nonzero elements of $S$. Let $P=P(S)$ denote the set of primitive elements of $S$. It is easy to see that $P$ is finite and is the unique minimal generating set of $S$. The \emph{embedding dimension} of $S$ is $e=e(S)=|P|$, the \emph{multiplicity} of $S$ is $m=m(S)=\min S^*$, the \emph{Frobenius number} of $S$ is $F=F(S)=\max(\mathbb{Z} \setminus S)$ and the \emph{conductor} of $S$ is $c=c(S)=F+1$, satisfying $c+\mathbb{N} \subseteq S$ and minimal with respect to that property. The \emph{genus} of $S$ is $g = g(S)=|\mathbb{N} \setminus S|$, its number of gaps. We partition $S$ as
$$
S = L \sqcup R,
$$
where $L = L(S)=\{a \in S \mid a < F(S)\}$ and $R = R(S)= \{a \in S \mid a > F(S)\}$, the \textit{left part} and \textit{right part} of $S$, respectively. 

Wilf's conjecture is the inequality 
$$e|L| \ge c.$$

While this conjecture remains open since 1978,
it has been verified in several cases. For convenience, we list some of them in a single statement together with their respective references.

\begin{theorem}\label{thm known cases}
Wilf's conjecture holds for all numerical semigroups satisfying one of the following conditions:
\begin{enumerate}
\item $e \le 3$~\cite{FroebergGottliebHaeggkvist1987SF-numerical}
\item $c \le 3m$~\cite{Eliahou2018JEMS-Wilfs} 
\item $e \ge m/3$~\cite{Eliahou2020EJC-graph}
\item $|L| \le 12$~\cite{EliahouMarin-Aragon2021CiA-numerical}
\item $m \le 19$~\cite{KliemStump2022DCG-new}
\end{enumerate}
\end{theorem}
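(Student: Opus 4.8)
The plan is to treat this statement as what it is: an aggregation of five independent sufficient conditions, each already established in the cited literature, so I would prove the five cases separately rather than hunt for a single unified argument. Throughout I would use the reformulation that, since every gap of $S$ lies below the conductor, $L = S \cap [0,c)$ has cardinality $|L| = c - g$, so Wilf's inequality $e|L| \ge c$ is equivalent to $e(c-g) \ge c$. The recurring technical device in all the analytic cases is to slice $[0,c)$ into the consecutive blocks $I_j = [jm,(j+1)m)$ and to control, block by block, how the primitive elements $P(S)$ and the elements of $L$ are distributed: the multiplicity $m$ sets the block width, the embedding dimension $e=|P|$ counts the primitives, and $c$ fixes the number of blocks (about $c/m$).

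For the elementary cases I would argue directly. For $e \le 3$ (case 1), the case $e=1$ is trivial since $S=\N$ forces $c=0$ and the inequality reads $0 \ge 0$, and $e=2$ is immediate from Sylvester's formulas: writing $S=\langle a,b\rangle$ with $\gcd(a,b)=1$ gives $c=(a-1)(b-1)$ and $g=(a-1)(b-1)/2$, whence $e|L| = 2(c-g) = c$ and Wilf holds with equality; the case $e=3$ uses the explicit structure theory of three-generated numerical semigroups to reach the same conclusion. For $|L| \le 12$ (case 4), the strategy is to fix the small number $|L|$ of left elements, note that $L$ determines $S$ up to finitely many possibilities, and bound $c$ from above in terms of $|L|$ and $e$ by a finite case analysis that forces $e|L| \ge c$.

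The harder analytic cases are $c \le 3m$ (case 2) and $e \ge m/3$ (case 3), where the block-counting and graph arguments do the real work. For $c \le 3m$ there are at most three blocks, so I would set up a counting/matching argument on $[0,m)$, $[m,2m)$, $[2m,3m)$ relating the Apéry set of $S$ modulo $m$ to both $L$ and $P(S)$, extracting enough primitives to certify $e(c-g) \ge c$. The condition $e \ge m/3$ is in a sense dual — many primitives relative to $m$ — and the known proof encodes the additive interactions of the primitives in a graph and bounds $|L|$ from below through that graph; reproducing this graph-theoretic estimate is the step I expect to be the main analytic obstacle, since it is the least formulaic and leans on the full apparatus of the cited 2020 paper.

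Finally, $m \le 19$ (case 5) is not amenable to any closed-form argument: here the plan is an exhaustive computer search over numerical semigroups of each fixed multiplicity, organized via Kunz coordinates or the semigroup tree, and the obstacle is purely computational — taming the combinatorial explosion well enough to certify Wilf's inequality for every such semigroup. Taken together, the proof of the collected statement is simply the concatenation of these five arguments, and I would present it as such, deferring the two heaviest cases to their respective references.
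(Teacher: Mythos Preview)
The paper gives no proof of this theorem at all: it is stated purely as a compilation of known results, with each item carrying only its literature citation. Your proposal correctly recognizes this and is, if anything, more generous than the paper --- you sketch the shape of each argument before deferring to the references, whereas the paper defers immediately. So your approach is fully consistent with the paper's.

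One small imprecision worth flagging in your sketch of case~(5), $m \le 19$: you describe it as ``an exhaustive computer search over numerical semigroups of each fixed multiplicity,'' but for fixed $m$ there are infinitely many numerical semigroups (the conductor is unbounded), so no literal enumeration is possible. The Kliem--Stump argument works instead by parametrizing semigroups of multiplicity $m$ as lattice points in the Kunz cone and showing that Wilf's inequality is implied face-by-face on that polyhedron, reducing the verification to a finite polyhedral computation. You do mention Kunz coordinates, so you may have this in mind, but the phrasing ``exhaustive search over numerical semigroups'' obscures the essential finiteness reduction and would be the step a careful reader would ask you to justify.
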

See also~\cite{Delgado2020-survey} for an extensive recent survey of partial results on Wilf's conjecture, and \cite{Bras-Amoros2008SF-Fibonacci, BrunsGarcia-SanchezONeillWilburne2020IJAC-Wilfs, Delgado2018MZ-question, DobbsMatthews2006, EliahouFromentin2019SF-misses, FromentinHivert2016MC-Exploring, Kaplan2012JPAA-Counting, MoscarielloSammartano2015MZ-conjecture, Sammartano2012SF-Numerical, Selmer1977JRAM-linear, Sylvester1882AJM-Subvariants, Wilf1978AMM-circle} for some other relevant papers.

\smallskip
The first significant verification by computer of Wilf's conjecture up to a given genus was accomplished in 2008 by Bras-Amor\'os~\cite{Bras-Amoros2008SF-Fibonacci}. There, Wilf's conjecture was announced to hold for all numerical semigroups of genus $g \le 50$. This result was extended to genus $g \le 60$ in 2016 by Fromentin-Hivert~\cite{FromentinHivert2016MC-Exploring}, to genus $g \le 65$ in 2021 by Bras-Amor\'os and Rodr{\'\i}guez~\cite{Bras-AmorosRodriguez2021-New}, and finally to genus $g \le 66$ in 2023 by Bras-Amor\'os~\cite{Bras-Amoros2023pp-seeds}.

In this paper, we extend the verification of Wilf's conjecture up to genus $g \le 100$. See Theorem~\ref{thm wilf 100} below. This is achieved by combining three main ingredients:

\begin{enumerate}
\item The main result of~\cite{Eliahou2020EJC-graph} stating that Wilf's conjecture holds in case $e \ge m/3$.
\item The method from~\cite{Delgado2019ae-Trimming} to exploit the above result by efficiently trimming the tree $\mathcal{T}$ of numerical semigroups (see below) and thereby drastically reduce the number of numerical semigroups up to a given large genus to test for Wilf's conjecture.
\item A fast parallelisable algorithm to enumerate all numerical semigroups up to a given large genus~\cite{FromentinHivert2016MC-Exploring}.
\end{enumerate}
For general reference books on numerical semigroups, see~\cite{Ramirez-Alfonsin2005Book-Diophantine, RosalesGarcia2009Book-Numerical}.

\section{The tree $\mathcal{T}$}\label{sec tree}

The set of all numerical semigroups can be organised in a rooted tree $\mathcal{T}$, with root $\mathbb{N}$ of genus $0$, such that all numerical semigroups of genus $g$ lie at distance $g$ from the root. Before recalling its construction, we introduce some terminology.

\subsection{Left and right primitive elements}
Let $S$ be a numerical semigroup. Recall the above partition $S = L \sqcup R$ relative to the Frobenius number $F(S)$. Accordingly, we partition the set $P$ of primitive elements of $S$ as 
$$
P = (P \cap L) \sqcup (P \cap R).
$$
We call \emph{left primitive} the elements of $P \cap L$ and \emph{right primitive} those of $P \cap R$. We denote
$$
\begin{aligned}
e &= |P|, \\
e_l &= |P \cap L|, \\
e_r &= |P \cap R|.
\end{aligned}
$$
Thus
$$
e = e_l + e_r.
$$

\subsection{The children of $S$}
We now briefly recall the construction of the tree $\mathcal{T}$. Let $S$ be a numerical semigroup, and let $a \in P \cap R$ be a right primitive element of $S$, if any. Then the set $S'=S\setminus \{a\}$ is still a numerical semigroup, as easily seen.

A \emph{child} of $S$ in $\mathcal{T}$ is any numerical semigroup of the above form
$$S'=S \setminus \{a\}$$
where $a \in P \cap R$. Clearly, the number of children of $S$ in $\mathcal{T}$ is equal to $e_r$.

\medskip
The Frobenius number, genus and multiplicity of a child $S'=S\setminus \{a\}$ of $S$ with $a \in P \cap R$ are easy to determine. Indeed, one has
$$
\left\{
\begin{aligned}
F(S') & = a, \\
g(S') & = g(S)+1
\end{aligned}
\right.
$$
as easily seen. As for the multiplicity of $S'$, one has
$$
m(S')=m(S)
$$
whenever $c(S) > m(S)$, see Proposition~\ref{prop child}. The only numerical semigroups $S$ for which $c(S)=m(S)$ are the so-called \textit{ordinary} or \textit{superficial} numerical semigroups $O_m$, defined for any $m \ge 1$ by
$$O_m=\langle m,m+1,\dots,2m-1 \rangle=\{0\} \sqcup (m+\mathbb{N}).$$
Then $O_m$ is of multiplicity $m$ and conductor $c=m$. It has exactly $m$ children, namely $O_m \setminus \{m+i\}$ for $0 \le i \le m-1$. They still have multiplicity $m$ for $1 \le i \le m-1$. The only case where the multiplicity differs is at $i=0$, for which $O_m \setminus \{m\}=O_{m+1}$ is of multiplicity $m+1$.

\smallskip
Note finally that any numerical semigroup of multiplicity $m \ge 1$ is a descendant of $O_m$ in the tree $\mathcal{T}$.

\subsection{Numerical semigroups of given genus} 

 For $g \in \N$, let $n_g$ denote the number of numerical semigroups of genus $g$. It is well known that $n_g$ is finite. The first few values of $n_g$ are $$(n_0,n_1,n_2,n_3,n_4,n_5,n_6)=(1,1,2,4,7,12,23).$$ In her famous paper~\cite{Bras-Amoros2008SF-Fibonacci}, Maria Bras-Amor\'{o}s conjectured that $n_g$ behaves like the $g$th Fibonacci number $F_g$, with a growth rate tending to the golden ratio $\phi=\frac{1+\sqrt5}2\approx 1.618$ and satisfying \begin{equation}\label{conj BA}
     n_g \ge n_{g-1}+n_{g-2}
 \end{equation} for all $g \ge 2$. The conjectured growth rate of $n_g$ was subsequently confirmed by A.~Zhai~\cite{Zhai}, implying $n_g \ge n_{g-1}$ for $g$ large enough. Yet the conjectured inequality~\eqref{conj BA} remains widely open to this day, as is the case for the much weaker inequality $n_g \ge n_{g-1}$ for all $g \ge 1$.
 
 Until now, the exact value of $n_g$ had been computed up to $g \le 72$. In this paper, with massive computations using a distributed version of the fast algorithms in~\cite{FromentinHivert2016MC-Exploring}, we unlock three new values, namely 
 $$
 \begin{array}{rcr}
 n_{73} &=& 6\,832\,823\,876\,813\,577, \\
 n_{74}&=& 11\,067\,092\,660\,179\,522,\\ 
 n_{75} &=& 17\,924\,213\,336\,425\,401.
 \end{array}
 $$
 See Section~\ref{sec experiments} for more details.

\section{Trimming $\mathcal{T}$}\label{sec trim}

The main ideas proposed in~\cite{Delgado2019ae-Trimming} to trim the tree $\mathcal{T}$ so as to drastically reduce the verification of Wilf's conjecture up to a given genus $G$ are the following ones.

\begin{notation} Let $G \ge 1$. We denote by $\mathcal{T}_G$ the subtree of $\mathcal{T}$ consisting of all numerical semigroups $S$ of genus $g(S) \le G$.  
\end{notation}

\begin{proposition}\label{prop child} Let $S \neq O_m$ be a numerical semigroup of multiplicity $m$. Let $S'$ be a child of $S$. Then 
\begin{enumerate}
\item $m(S')=m(S)$ \vspace{1mm}
\item $e_l(S') \ge e_l(S)$ \vspace{1mm}
\item $e(S)-1\le e(S') \le e(S)$
\end{enumerate}
\end{proposition}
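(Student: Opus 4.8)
The plan is to track precisely how the set $P$ of primitive elements changes when we pass from $S$ to a child $S' = S \setminus \{a\}$ with $a \in P \cap R$, and to reduce all three claims to this. First I would dispose of the easy facts. Since $S \ne O_m$ we have $c > m$, and as $a \in R$ we have $a \ge c > m$; hence $a \ne m$, so $m$ remains the least nonzero element of $S' = S \setminus \{a\}$, giving (1). For (2), note $F(S') = a > F(S)$, so $L(S) \subseteq L(S')$; every left primitive $p$ of $S$ satisfies $p < F(S) < a$, hence $p \ne a$, and $p$ stays a minimal generator of $S'$ (a relation $p = s_1 + s_2$ in $S'$ would already be one in $S$) while still lying in $L(S')$. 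The resulting inclusion $P(S) \cap L(S) \hookrightarrow P(S') \cap L(S')$ yields $e_l(S') \ge e_l(S)$.

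For (3) the decisive reduction is that removing the single primitive $a$ cannot destroy any other primitive: if $p \in P(S) \setminus \{a\}$ then, exactly as above, $p \in P(S')$. Hence $P(S') = (P(S) \setminus \{a\}) \sqcup N$, where $N$ collects the \emph{new} minimal generators, namely the $x \in S'$ that are decomposable in $S$ but not in $S'$. This gives $e(S') = e(S) - 1 + |N|$, so that claim (3) is equivalent to the single inequality $|N| \le 1$.

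The crux, and the step I expect to be the only real obstacle, is proving $|N| \le 1$. Here I would exploit that, $c$ being the conductor, every integer $\ge c$ lies in $S$, so $R(S) = \{x \in \Z : x \ge c\}$. Take $x \in N$. Since $x$ is decomposable in $S$ but every decomposition is lost in $S'$, at least one decomposition must use $a$, so $b := x - a \in S^*$ and thus $b \ge m$, $x = a + b \ge c + m$. Now examine $x - m = a + (b - m)$: because $a \ge c$ and $b \ge m$ we get $x - m \ge c$, hence $x - m \in S$. If $b > m$, then $x - m > a$, so $x - m \ne a$ and also $m \ne a$, whence $x = m + (x - m)$ is a decomposition of $x$ avoiding $a$ entirely, contradicting $x \in N$. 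Therefore $b = m$, i.e. $x = a + m$, so $N \subseteq \{a + m\}$ and $|N| \le 1$. Combined with the reduction this yields $e(S) - 1 \le e(S') \le e(S)$. I would emphasize that the hypothesis $S \ne O_m$ (equivalently $c > m$) is used exactly to force $a \ne m$ in (1) and the $a$-avoidance $m \ne a$ in the crux; for $O_m$ the multiplicity may jump and the generator count can drop by more than one, so the hypothesis is genuinely needed.
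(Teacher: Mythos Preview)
Your proof is correct and follows essentially the same route as the paper, which likewise shows that $P(S')$ equals either $P(S)\setminus\{a\}$ or $(P(S)\setminus\{a\})\sqcup\{a+m\}$; you simply supply the details the paper leaves as ``easily seen'', using the conductor property to pin down $N\subseteq\{a+m\}$. One minor inaccuracy in your closing commentary: for $S=O_m$ the way (3) can fail is that $e(S')$ may \emph{exceed} $e(S)$ (e.g.\ $O_m\setminus\{m\}=O_{m+1}$ has embedding dimension $m+1$), not that it can drop by more than one.
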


\begin{proof} We have $S' = S \setminus \{a\}$ for some right primitive element $a \in P \cap R$. Denote $m,c,P,L$ the multiplicity, conductor, primitive elements and left part of $S$, respectively, and $m',c',P',L'$ the corresponding objects for $S'$.

Since $a \ge c > m$, it follows that $\min (S')^* = \min S^*$, i.e. $m'=m$. Moreover, since $c'=a+1>c$, it follows that $L \subset L'$, and any left primitive element $a \in P \cap L$ of $S$ remains left primitive in $S'$. Thus $P \cap L \subseteq P' \cap L'$, implying $e_l(S') \ge e_l(S)$. Finally, as easily seen, one has either $P'= P \setminus \{a\}$ or $P' = P \setminus \{a\} \sqcup \{a+m\}$. The former occurs when $a+m=s_1+s_2$ for some pair $\{s_1,s_2\} \subset S^*$ distinct from $\{a,m\}$, whereas the latter occurs when $a+m$ has no other representation as an element of $S^*+S^*$. Therefore $e(S)-1\le e(S') \le e(S)$.
\end{proof}

\begin{corollary}\label{cor el}
Let $S \neq O_m$ be a numerical semigroup of multiplicity $m$ such that $e_l \ge m/3$. Then all descendants $T$ of $S$ in $\mathcal{T}$ satisfy Wilf's conjecture.
\end{corollary}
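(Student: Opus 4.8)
The plan is to show that the hypothesis $e_l \ge m/3$ propagates to every descendant of $S$ and then to invoke the already-established case $e \ge m/3$ of Wilf's conjecture, namely Theorem~\ref{thm known cases}(3). The whole argument is essentially a bookkeeping of the invariants $m$ and $e_l$ under the child relation.

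First I would record the one-step behaviour. By Proposition~\ref{prop child}, if $T \neq O_m$ has multiplicity $m$ and $T'$ is a child of $T$, then $m(T')=m(T)$ and $e_l(T') \ge e_l(T)$. So along a single edge of $\mathcal{T}$ the multiplicity is unchanged and $e_l$ is non-decreasing. The only point requiring care — and really the only genuine step in the proof — is that Proposition~\ref{prop child} may be applied repeatedly all the way down the descent path, which demands that no semigroup encountered along the way is ordinary. I would check this directly: a child $T'=T\setminus\{a\}$ of a non-ordinary $T$ has right primitive $a > F(T)$, hence $a \ge c(T)$, so its conductor satisfies $c(T')=a+1 > c(T) > m = m(T')$. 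Thus $c(T') > m(T')$, which means $T'$ is again non-ordinary.

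Since $S \neq O_m$ by hypothesis, an easy induction on the distance from $S$ to $T$ then shows that every descendant $T$ of $S$ satisfies $T \neq O_m$, $m(T)=m$, and $e_l(T) \ge e_l(S) \ge m/3$. Finally, because $e(T) \ge e_l(T)$ holds for every numerical semigroup, this chain gives $e(T) \ge e_l(T) \ge m/3 = m(T)/3$ for each such $T$. Hence every descendant falls into the regime $e \ge m/3$, and Theorem~\ref{thm known cases}(3) concludes that each descendant $T$ satisfies Wilf's conjecture. There is no substantial obstacle here beyond the non-ordinariness verification that legitimises iterating Proposition~\ref{prop child}.
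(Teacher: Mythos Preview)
Your proof is correct and follows essentially the same route as the paper: iterate Proposition~\ref{prop child} along the descent path to get $e(T)\ge e_l(T)\ge e_l(S)\ge m/3=m(T)/3$, then apply Theorem~\ref{thm known cases}(3). Your explicit verification that non-ordinariness propagates to children (so that Proposition~\ref{prop child} may legitimately be iterated) is a detail the paper leaves implicit, but the argument is otherwise identical.
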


\begin{proof}
Let $T$ be a descendant of $S$. Then, by a repeated application of Proposition~\ref{prop child}\:(2) for each generation of children going down from $S$ to $T$, we have  $$e(T) \ge e_l(T) \ge e_l(S) \ge m/3=m(T)/3.$$ It then follows from Theorem~\ref{thm known cases}\:(3) that $T$ satisfies Wilf's conjecture.
\end{proof}

\begin{corollary}\label{cor e} Let $G \in \mathbb{N}^*$. Let $S$ be a numerical semigroup of genus $g \le G$ such that $e \ge m/3+(G-g)$. Then all descendants $T$ of $S$ of genus $g(T) \le G$ satisfy Wilf's conjecture.
\end{corollary}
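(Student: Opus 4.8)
The plan is to reduce to the solved case $e \ge m/3$ of Theorem~\ref{thm known cases}\:(3), just as in Corollary~\ref{cor el}, but now accounting for the fact that, unlike $e_l$, the embedding dimension $e$ may \emph{decrease} as one descends $\mathcal{T}$. Fix a descendant $T$ of $S$ with $g(T) \le G$ and let $S = S_0, S_1, \dots, S_d = T$ be the path from $S$ to $T$ in $\mathcal{T}$, where $S_{i+1}$ is a child of $S_i$; thus $d = g(T) - g \le G - g$. It suffices to prove $e(T) \ge m(T)/3$, after which Theorem~\ref{thm known cases}\:(3) applies.

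First I would dispose of the generic case where $S$ is not ordinary. A child of a non-ordinary semigroup is again non-ordinary, since $c$ strictly increases at each step while $m$ stays fixed; hence the whole path avoids ordinary semigroups and Proposition~\ref{prop child} applies at every step. Part~(1) gives $m(T) = m(S) = m$, and iterating part~(3) along the $d$ generations telescopes to $e(T) \ge e(S) - d$. Combining with the hypothesis and $d \le G - g$ yields
$$
e(T) \ge e - d \ge e - (G - g) \ge \frac{m}{3} = \frac{m(T)}{3}.
$$
This is the conceptual core: the buffer $G - g$ in the hypothesis is exactly what compensates for the at most one unit of $e$ lost at each of the at most $G - g$ generations from $S$ down to $T$.

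The genuinely delicate case, and the main obstacle, is when $S = O_m$ is ordinary, for then Proposition~\ref{prop child} fails at the root of the subtree and the multiplicity grows along the ordinary chain $O_m \to O_{m+1} \to \cdots$. Here I would first rewrite the hypothesis: with $g = m - 1$ and $e = m$ it becomes $m \ge \tfrac{3}{5}(G + 1)$. The path from $O_m$ to $T$ stays ordinary up to some $O_{m+k}$ and then either stops at $T = O_{m+k}$, where $e(T) = m(T)$ makes the claim trivial, or branches off at a non-ordinary child $V$ of $O_{m+k}$, beyond which the non-ordinary analysis resumes with fixed multiplicity $m + k$. A short generator count shows that every branching child of $O_{m+k}$ has embedding dimension at least $(m + k) - 1$; granting this and telescoping from $V$ to $T$, the inequality $m + k \ge m \ge \tfrac{3}{5}(G + 1)$ again forces $e(T) \ge m(T)/3$.

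A tidier way to package both cases at once is to establish the single step inequality
$$
e(U') - \frac{m(U')}{3} \ge e(U) - \frac{m(U)}{3} - 1
$$
for every child $U'$ of every numerical semigroup $U$. For $U \neq O_m$ this is immediate from Proposition~\ref{prop child}, and for $U = O_m$ it follows from the explicit list of children together with the generator count above. Summing this telescoping inequality along $S_0, \dots, S_d$ and using $d \le G - g$ with the hypothesis $e - m/3 \ge G - g$ gives $e(T) - m(T)/3 \ge 0$ in one stroke, which is what I would ultimately write down.
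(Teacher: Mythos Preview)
Your proof is correct and its core matches the paper's exactly: telescope the one-step bound $e(S')\ge e(S)-1$ along the path from $S$ to $T$, combine with $g(T)-g\le G-g$ and the hypothesis to get $e(T)\ge m/3$, and invoke Theorem~\ref{thm known cases}\:(3). Where you go further than the paper is in the ordinary case $S=O_m$: the paper's proof ends with ``$\,=m(T)/3$'', tacitly using $m(T)=m$, which Proposition~\ref{prop child}\:(1) only guarantees when the path avoids the ordinary chain. Your separate treatment of that case---and especially your uniform one-step inequality $e(U')-m(U')/3\ge e(U)-m(U)/3-1$, valid for \emph{every} parent--child pair including $U=O_m$---closes that small gap cleanly and is a genuine addition of rigor.
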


\begin{proof}
Let $h=g(T)$. Then $g \le h \le G$ by hypothesis, and $T$ is an $(h-g)$th descendant of $S$. Now at each step from $S$ down to $T$ in $\mathcal{T}$, the number of primitive elements diminishes by at most $1$ as stated in Proposition~\ref{prop child}\:(3). Hence $$e(T) \ge e-(h-g) \ge e+g-G \ge m/3+(G-g)+g-G=m(T)/3.$$ It then follows from Theorem~\ref{thm known cases}\:(3) that $T$ satisfies Wilf's conjecture.
\end{proof}

Consequently, when exploring $\mathcal{T}_G$ to probe Wilf's conjecture up to a given maximal genus $G$, the subtree rooted at any numerical semigroup $S$ satisfying Corollary~\ref{cor el} or~\ref{cor e} can be completely cut off from $\mathcal{T}_G$. What remains after this systematic trimming of $\mathcal{T}_G$ is a subtree $\mathcal{T}_G(3)$ all of whose numerical semigroups $S$ satisfy

\begin{enumerate}
\item $e_l(S) < m(S)/3$, \vspace{1mm}
\item $e(S) < m(S)/3 + (G-g(S))$.
\end{enumerate}

Summarizing, to probe Wilf's conjecture up to genus $G$, we only need to test those numerical semigroups $S$ in $\mathcal{T}_G(3)$. This is a significant reduction, as the subtree $\mathcal{T}_G(3)$ turns out to be much smaller than $\mathcal{T}_G$. For instance, for $G=100$, we found that $\mathcal{T}_{100}(3)$ counts approximately $4.5 \times 10^{15}$ nodes, as compared to the full tree $\mathcal{T}_{100}$ counting roughly $n_{75}\times \phi^{25} \approx 3.2 \times 10^{19}$ nodes. (See Section~\ref{sec experiments}.) This level of reduction allowed us to reach one of the main computational results of this paper.

\begin{theorem}\label{thm wilf 100}
    Wilf's conjecture holds for all numerical semigroups of genus $g \le 100$. $\Box$
\end{theorem}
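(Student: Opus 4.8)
The plan is to turn the verification into a single finite, if enormous, computation over the tree $\mathcal{T}$, and then to run it. Every numerical semigroup of multiplicity $m$ is a descendant of the ordinary semigroup $O_m$, so every semigroup of genus $g \le 100$ occurs exactly once as a node of $\mathcal{T}_{100}$; it therefore suffices to visit each such node $S$ and confirm $e|L| \ge c$. First I would set up a top-down traversal of $\mathcal{T}_{100}$ from the root $\mathbb{N}$, generating the children of each node through the right-primitive construction recalled in Section~\ref{sec tree}, and recording for each node the quantities $m$, $g$, $e$, $e_l$, $c$ and $|L|$, which the traversal can update incrementally from parent to child.

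The decisive step is to avoid ever expanding the full tree, whose size near $3.2 \times 10^{19}$ puts it out of reach. Here I would invoke the trimming of Section~\ref{sec trim}: whenever the traversal reaches a non-ordinary node $S$ satisfying the hypothesis of Corollary~\ref{cor el} (namely $e_l(S) \ge m(S)/3$) or of Corollary~\ref{cor e} with $G = 100$ (namely $e(S) \ge m(S)/3 + (100 - g(S))$), I would prune the entire subtree rooted at $S$ and not generate any of its children. This is justified because those two corollaries guarantee, via Theorem~\ref{thm known cases}\,(3), that $S$ itself and every descendant $T$ of $S$ with $g(T) \le 100$ already satisfies Wilf's conjecture. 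The correctness of the whole scheme then follows from a simple dichotomy: along the path from $\mathbb{N}$ to any given node $S$ of genus $\le 100$, either some node triggers a corollary, in which case $S$ is one of its descendants and Wilf holds for free, or no node does, in which case $S$ survives into the trimmed subtree $\mathcal{T}_{100}(3)$ and must be tested explicitly. The finitely many ordinary semigroups $O_m$ lie on the spine of the traversal and are tested directly; each satisfies Wilf with equality.

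It then remains to test $e|L| \ge c$ on every one of the roughly $4.5 \times 10^{15}$ surviving nodes of $\mathcal{T}_{100}(3)$, and this is where the real difficulty lies: it is computational rather than conceptual. Even after trimming, the surviving tree is some four orders of magnitude smaller than $\mathcal{T}_{100}$ yet still far beyond what a naive enumeration could handle. I would therefore build $\mathcal{T}_{100}(3)$ on top of the fast enumeration algorithm of~\cite{FromentinHivert2016MC-Exploring}, applying the two pruning tests on the fly so that cut subtrees are never materialised, and distributing the independent subtrees across many machines to parallelise the search. The main obstacle is thus engineering this traversal to complete within feasible time and memory at the scale of $10^{15}$ nodes; once every surviving node is confirmed to satisfy the inequality, the theorem follows.
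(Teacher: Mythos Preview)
Your proposal is correct and matches the paper's own proof: construct the trimmed subtree $\mathcal{T}_{100}(3)$ using Corollaries~\ref{cor el} and~\ref{cor e}, verify Wilf's inequality on each surviving node via the parallelised enumeration of~\cite{FromentinHivert2016MC-Exploring}, and conclude since every semigroup of genus $\le 100$ either lies in $\mathcal{T}_{100}(3)$ or is covered by one of the corollaries. Your write-up simply spells out more of the traversal mechanics and the correctness dichotomy than the paper's terse three-line proof does.
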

\begin{proof}
    By computer, we constructed the much smaller subtree $\mathcal{T}_G(3) \subset \mathcal{T}_G$ for $G=100$, and checked that all of its nodes  satisfy Wilf's conjecture. The claimed statement follows from Corollary~\ref{cor el} and~\ref{cor e}.
\end{proof}

\subsection{Further trimming} There are ways to further trim $\mathcal{T}_G(3)$ and thus further reduce the number of numerical semigroups of genus $g \le G$ to test for Wilf's conjecture. They may be used whenever the added computational cost remains reasonable. 

Here is a useful instance, exploiting the result that numerical semigroups with embedding dimension $e \le 3$ satisfy Wilf's conjecture~\cite{FroebergGottliebHaeggkvist1987SF-numerical}, as recalled in Theorem~\ref{thm known cases}\:(1).

\begin{proposition}\label{prop 4g/3} Let $S$ be a numerical semigroup such that $c \ge 4g/3$. Then $S$ satisfies Wilf's conjecture.
\end{proposition}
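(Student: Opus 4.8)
The plan is to translate Wilf's inequality $e|L| \ge c$ into the equivalent form $e(c-g) \ge c$ and then settle it by a short case split on the embedding dimension $e$. First I would record the standard identity $|L| = c-g$: among the $c$ integers $0, 1, \dots, c-1$, exactly $g$ are gaps, and since the Frobenius number $F = c-1$ is itself a gap while everything $\ge c$ lies in $S$, the elements of $S$ in this range are precisely those of $L$. Hence $|L| = c-g$, and Wilf's conjecture for $S$ reads $e(c-g) \ge c$.

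Next I would dispose of the case $e \le 3$ at once: such $S$ satisfy Wilf's conjecture directly by Theorem~\ref{thm known cases}\:(1), so nothing further is needed. This leaves only the case $e \ge 4$.

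For $e \ge 4$, I would feed in the hypothesis $c \ge 4g/3$, which rearranges to $g \le 3c/4$ and therefore $c - g \ge c/4$. Multiplying by $e$ then gives $e(c-g) \ge 4 \cdot (c/4) = c$, which is exactly the desired inequality, completing the argument.

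I expect no genuine obstacle here; the only step demanding any care is the bookkeeping behind $|L| = c-g$, together with the degenerate cases such as $S = \mathbb{N}$ (where $c = g = 0$ and the inequality is vacuous). The constant $4/3$ is calibrated precisely so that the bound $c - g \ge c/4$ meets the cutoff $e \ge 4$ coming from Theorem~\ref{thm known cases}\:(1): the two halves of the case split are designed to join exactly at $e = 3$ versus $e = 4$, which is what makes the argument go through and explains why a weaker hypothesis than $c \ge 4g/3$ would not suffice for this particular approach.
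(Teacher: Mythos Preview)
Your proposal is correct and follows essentially the same approach as the paper: both arguments use $|L|=c-g$, split into the cases $e\le 3$ (handled by Theorem~\ref{thm known cases}\:(1)) and $e\ge 4$, and in the latter case rewrite the hypothesis as $|L|\ge c/4$ to conclude $e|L|\ge 4|L|\ge c$.
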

\begin{proof} As $g=c-|L|$, the hypothesis yields $c \ge 4(c-|L|)/3$, i.e. $4|L| \ge c$. If $e \ge 4$ then $e|L| \ge 4|L| \ge c$, so $S$ satisfies Wilf's conjecture. And if $e \le 3$, the same conclusion holds by Theorem~\ref{thm known cases}\:(1).
\end{proof}
\begin{corollary} Let $G \in \mathbb{N}^*$. Let $S$ be a numerical semigroup of genus $g \le G$ such that $|L| \ge G/3$. Then all descendants $T$ of $S$ of genus $g(T) \le G$ satisfy Wilf's conjecture.
\end{corollary}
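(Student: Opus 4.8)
The plan is to reduce everything to Proposition~\ref{prop 4g/3} by tracking how the size of the left part behaves as one descends in $\mathcal{T}$. The guiding observation is that $|L|$ never decreases along a path in the tree. Indeed, when one passes from a numerical semigroup to a child $S' = S \setminus \{a\}$ with $a$ a right primitive element, the Frobenius number jumps up to $a$ and hence the conductor grows to $c' = a+1 > c$; as a result $L \subseteq L'$, exactly the inclusion already recorded in the proof of Proposition~\ref{prop child}. First I would iterate this inclusion over each generation from $S$ down to an arbitrary descendant $T$, obtaining $|L(T)| \ge |L(S)|$.

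With this monotonicity in hand, I would fix a descendant $T$ of $S$ with $g(T) \le G$ and combine the hypothesis $|L(S)| \ge G/3$ with $g(T) \le G$ to get
$$|L(T)| \ge |L(S)| \ge G/3 \ge g(T)/3,$$
that is, $3|L(T)| \ge g(T)$. The remaining step is purely arithmetic: using the identity $c = g + |L|$, valid for every numerical semigroup because the interval $\{0,1,\dots,c-1\}$ is the disjoint union of the $g$ gaps and the $|L|$ small elements of $S$, the inequality $3|L(T)| \ge g(T)$ rewrites as $c(T) \ge 4\,g(T)/3$. Proposition~\ref{prop 4g/3} then applies verbatim to $T$ and yields Wilf's conjecture for it.

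The hard part, such as it is, will be the monotonicity $|L(T)| \ge |L(S)|$; everything after that is bookkeeping with $c = g + |L|$. But this monotonicity is not really an obstacle, since each child step strictly raises the conductor and therefore cannot expel any element already lying in the current left part. I would only need to be mildly careful that the claim concerns all descendants of genus at most $G$, not merely immediate children, which is handled automatically by the transitivity of the inclusions $L(S) \subseteq L(S_1) \subseteq \cdots \subseteq L(T)$ along the connecting path.
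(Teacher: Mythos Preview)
Your proposal is correct and follows essentially the same route as the paper: both establish $|L(T)| \ge |L(S)|$ via the inclusion $L \subseteq L'$ at each child step, chain $|L(T)| \ge G/3 \ge g(T)/3$, convert to $c(T) \ge 4g(T)/3$ via $c = g + |L|$, and invoke Proposition~\ref{prop 4g/3}. The paper merely compresses this into a single displayed chain of inequalities starting from $c(T) = |L(T)| + g(T)$.
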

\begin{proof} Let $T$ be a descendant of $S$ of genus $g(T) \le G$. We have
$$
\begin{aligned}
c(T) &= |L(T)|+g(T) \\
& \ge |L|+g(T) \\
& \ge G/3+g(T) \\
& \ge g(T)/3+g(T) \\
& = 4g(T)/3. 
\end{aligned} 
$$
Proposition~\ref{prop 4g/3} then implies that $T$ satisfies Wilf's conjecture.
\end{proof}

\section{The case $c \in m\N$}\label{sec special}

Given a numerical semigroup $S$, consider the Euclidean division of its conductor $c$ by its multiplicity $m$ with nonpositive remainder: 
\begin{equation}\label{rho}
c= qm-\rho, \quad 0 \le \rho \le m-1.
\end{equation}

As argued in~\cite{Eliahou2023pp-Divsets}, the particular case $c=qm$, i.e. with $\rho=0$, might well be the heart of Wilf's conjecture. Indeed, the proofs of Wilf's conjecture in either case $c \le 3m$~\cite{Eliahou2018JEMS-Wilfs} or $e \ge m/3$~\cite{Eliahou2020EJC-graph} can be significantly shortened in this particular case. Moreover, the first five instances of the rare occurrence $W_0(S) < 0$ all belong to this case~\cite{EliahouFromentin2019SF-misses}.

\begin{definition} A numerical semigroup $S$ is \emph{special} if its multiplicity $m$ divides its conductor $c$.
\end{definition}

For instance, the ordinary numerical semigroup $O_m=\{0\} \cup (m+\N)$ is special since it satisfies $c=m$. The above discussion leads one to think that, if Wilf's conjecture is false, then some counterexamples might well be special. Thus, the special case should be given priority in related research work.

In this context, the validity of Wilf's conjecture has been extended to the case  $e \ge m/4$ for special numerical semigroups. More generally, using the notation in~\eqref{rho}, the result reads as follows~\cite{Eliahou2023pp-Divsets}.

\begin{theorem}\label{thm m/4} Let $S$ be a numerical semigroup satisfying $e \ge m/4$. Then $e|L|\ge c-m+\rho$. Morever, if $S$ is special, then $e|L|\ge c$, i.e. $S$ satisfies Wilf's conjecture.
\end{theorem}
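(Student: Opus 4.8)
The plan is to reduce the statement to a purely additive counting inequality over the decomposition of $\N$ into blocks of length $m$, feeding in the hypothesis $e \ge m/4$ only at the end. First I would record a convenient reformulation. Writing $c = qm - \rho$ with $0 \le \rho \le m-1$ as in \eqref{rho}, one has $c - m + \rho = (q-1)m$, so the first assertion is exactly $e|L| \ge (q-1)m$, while in the special case $\rho = 0$ (so $c = qm$) the required bound is the sharper $e|L| \ge qm = c$. I would also dispose at once of the easy regime $e \ge m/3$: there Theorem~\ref{thm known cases}\,(3) already gives $e|L| \ge c = qm - \rho \ge (q-1)m$, so it suffices to treat the range $m/4 \le e < m/3$.

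Next I would set up the blocks $I_j = [jm,(j+1)m)$ for $0 \le j \le q-1$, which cover $[0,qm) \supseteq [0,c)$ since $(q-1)m < c \le qm$. Because $F = c-1 \notin S$, the left part is $L = S \cap [0,c) = \bigsqcup_{j=0}^{q-1}(S \cap I_j \cap [0,c))$, and the only truncated block is $I_{q-1}$, which meets $[0,c)$ in an interval of length $m - \rho$. Translation by $m$ injects $S \cap I_j$ into $S \cap I_{j+1}$, so the block counts $n_j = |S \cap I_j|$ are nondecreasing with $n_0 = 1$ and $n_j = m$ for $j \ge q$. This monotonicity is what lets the last, complete block carry an extra full contribution of size $m$ in the special case, and is the source of the gain from $(q-1)m$ to $qm$ there.

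The heart of the argument is a double count of the pairs in $P \times L$ whose sum lands in the first complete post-conductor block $B = [c,c+m) \subseteq S$. For $y \in B$ put $r(y) = |\{p \in P : y - p \in L\}|$; since the assignment $p \mapsto (p, y-p)$ produces distinct elements of $P \times L$, one has $\sum_{y \in B} r(y) = |\{(p,\ell) \in P\times L : p+\ell \in B\}| \le e|L|$. Thus everything reduces to showing that, on average over the $m$ values $y \in B$, one has $r(y) \ge q-1$ (respectively $r(y) \ge q$ when $S$ is special). Equivalently, summing over $\ell \in L$, I must bound $\sum_{\ell \in L} |P \cap [c-\ell, c-\ell+m)|$ from below by $(q-1)m$, exploiting that each sliding window of length $m$ must meet enough primitives once $e$ is not too small relative to $m$.

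I expect this last step---the quantitative lower bound on how often a primitive completes an element of $L$ to an element of $B$---to be the main obstacle, and the only place where $e \ge m/4$ (rather than the stronger $e \ge m/3$) is genuinely used: the constant $4$ should emerge from balancing the number of primitives against the number of blocks, exactly as the constant $3$ does in the proof of Theorem~\ref{thm known cases}\,(3). Finally, for the \emph{moreover} assertion I would re-run the same count in the aligned situation $c = qm$, where the untruncated last block $I_{q-1}$ supplies one additional representation for each $y \in B$, upgrading the average bound from $q-1$ to $q$ and hence yielding $e|L| \ge qm = c$.
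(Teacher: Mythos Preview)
The paper does not contain a proof of this theorem: it is quoted from~\cite{Eliahou2023pp-Divsets} and used as a black box in Section~\ref{sec special}. So there is no ``paper's own proof'' to compare against.

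That said, your proposal is not a proof but an outline, and you identify this yourself: the crucial step --- showing $\sum_{y\in B} r(y) \ge (q-1)m$ (or $\ge qm$ in the special case) --- is left as an expectation (``I expect this last step \ldots\ to be the main obstacle''). Everything preceding it (the reformulation $c-m+\rho=(q-1)m$, the reduction to $m/4 \le e < m/3$ via Theorem~\ref{thm known cases}\,(3), the block decomposition, and the trivial bound $\sum_{y\in B} r(y) \le e|L|$) is correct and standard, but carries no real content: the whole weight of the theorem lies precisely in the step you defer.

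Concretely, for a fixed $\ell\in L$ the window $[c-\ell,\,c+m-\ell)$ has length~$m$ but there is no a~priori reason it should contain many primitives; for some $\ell$ it may contain none. Turning the global hypothesis $e \ge m/4$ into a usable lower bound on such sliding-window counts is exactly the substance of~\cite{Eliahou2023pp-Divsets}, where the argument relies on auxiliary combinatorial structures rather than on a direct averaging over~$B$. Your heuristic that ``the constant~$4$ should emerge from balancing the number of primitives against the number of blocks, exactly as the constant~$3$ does'' is not an argument: already the passage from~$3$ to~$4$ required a genuinely new idea, and nothing in your outline indicates how that balancing would actually be carried out.
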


Let us see how we used here this theorem to push the verification of Wilf's conjecture up to genus $G=120$ in the special case $c \in m\N$.

\smallskip
To start with, the method in Section~\ref{sec trim} to trim $\mathcal{T}$ or its bounded version $\mathcal{T}_G$ to get the greatly reduced relevant subtree $\mathcal{T}_G(3)$ in case $e < m/3$ works as well to retain only those numerical semigroups such that $e < m/d$ for some fixed integer $d \ge 3$. This yields a subtree $\mathcal{T}_G(d)$ of $\mathcal{T}_G$ all of whose numerical semigroups $S$ satisfy  \vspace{1mm}

\begin{enumerate}
\item $e_l(S) < m(S)/d$, \vspace{1mm}
\item $e(S) < m(S)/d + (G-g(S))$.
\end{enumerate}

\smallskip
By Theorem~\ref{thm m/4}, the case of relevance to us here is $d=4$. The subtree $\mathcal{T}_G(4)$ may be further trimmed by exploiting the added hypothesis $c=qm$, as explained below. For convenience, we introduce the following definition.

\subsection{Special trimming}{} We start with a condition on a non-special numerical semigroup $S$ ensuring that no descendant of $S$ is special.

\begin{proposition} Let $S$ be a non-special numerical semigroup. Assume further that $S$ has no right primitive element $a \in P \cap R$ such that $a \equiv -1 \bmod m$. Then no descendant $T$ of $S$ is special.
\end{proposition}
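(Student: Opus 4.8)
The plan is to prove a stronger statement by induction on the subtree of $\mathcal{T}$ rooted at $S$: that every numerical semigroup $T$ in this subtree is non-special \emph{and}, moreover, has no right primitive element $a \in P(T) \cap R(T)$ with $a \equiv -1 \bmod m$. Strengthening the inductive hypothesis in this way is essential, since plain non-specialness does not obviously propagate to children on its own. Since $S$ is non-special we have $c(S) \neq m(S)$, hence $c(S) > m(S)$ and $S \neq O_m$; by Proposition~\ref{prop child}\,(1) the multiplicity is then preserved throughout the subtree, so I write $m = m(S)$ for this common value.

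The key observation is that specialness of a child translates into a single congruence on the removed primitive. If $T' = T \setminus \{a\}$ is a child of $T$ obtained by deleting the right primitive $a \in P(T) \cap R(T)$, then $F(T') = a$ and $m(T') = m$, so $c(T') = a+1$. Hence $T'$ is special, i.e. $m \mid c(T')$, if and only if $a \equiv -1 \bmod m$. In particular the hypotheses on $S$ simultaneously give the base case of the induction (that $S$ is non-special and has no right primitive $\equiv -1 \bmod m$) and the fact that no immediate child of $S$ is special.

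For the inductive step I would take $T$ in the subtree satisfying both properties and let $T' = T \setminus \{a\}$ be an arbitrary child, $a \in P(T) \cap R(T)$. That $T'$ is non-special is immediate from the key observation and the hypothesis $a \not\equiv -1 \bmod m$. To propagate the congruence condition, I would invoke the description of the primitives of a child established in the proof of Proposition~\ref{prop child}: either $P(T') = P(T) \setminus \{a\}$, or $P(T') = (P(T) \setminus \{a\}) \cup \{a+m\}$. The right primitives of $T'$ are the primitives exceeding $F(T') = a$. Since $a > F(T)$, any inherited primitive of $T'$ that is $> a$ is also $> F(T)$, hence was already a right primitive of $T$, and so is $\not\equiv -1 \bmod m$ by the inductive hypothesis.

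The one point requiring care --- and the main obstacle --- is the possibly new primitive $a+m$ appearing in the second case. I would dispose of it by noting that $a+m \equiv a \bmod m$, and that $a$ is precisely the right primitive of $T$ we deleted, so $a \not\equiv -1 \bmod m$ by hypothesis; therefore $a+m \not\equiv -1 \bmod m$ as well. Thus $T'$ again has no right primitive $\equiv -1 \bmod m$, which closes the induction and shows that every descendant of $S$ is non-special, as claimed.
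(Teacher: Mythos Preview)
Your proof is correct and follows essentially the same route as the paper's: induct on the subtree, use $c(T')=a+1$ to reduce specialness of a child to the congruence $a\equiv -1\bmod m$, and propagate the condition on right primitives via the dichotomy $P(T')=P(T)\setminus\{a\}$ or $P(T')=(P(T)\setminus\{a\})\cup\{a+m\}$. If anything, you are slightly more careful than the paper in two places: you explicitly justify $S\neq O_m$ from non-specialness, and you explicitly check that an inherited right primitive of $T'$ was already a right primitive of $T$ (since it exceeds $a>F(T)$), a point the paper leaves implicit.
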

\begin{proof} Let $S'=S\setminus \{a\}$ with $a \in P \cap R$ be a child of $S$. As usual, we denote by $m,F,c,P$ the multiplicity, Frobenius number, conductor and primitive elements of $S$, and by $m',F',c',P'$ the corresponding objects for $S'$. First of all, $S \neq O_m$ since $O_m$ is special. Hence $m'=m$ by Proposition~\ref{prop child}. We have $F'=a$ and so $c'=a+1$. Since $a \not\equiv -1 \bmod m$ by hypothesis, and $m'=m$, we have $c' \not \equiv 0 \bmod m'$. Therefore $S'$ is not special. Moreover, no right primitive element of $S'$ can be congruent to $-1$ mod $m'$ since $P'=P \setminus \{a\}$ or $P \setminus \{a\} \sqcup \{a+m\}$ as seen in the proof of Proposition~\ref{prop child}. Hence $S'$ satisfies the same hypotheses as $S$ and we are done by induction on the distance between $S$ and any of its descendant $T$.
\end{proof}

We denote by $\mathcal{T}'_{G}(d)$ the subtree obtained from $\mathcal{T}_{G}(d)$ by the above special trimming.

\subsection{Outcome}

Having constructed the subtree $\mathcal{T}'_{G}(d)$ for $d=4$ and maximal genus $G=120$, we ended up with the following computational result.

\begin{theorem}\label{thm special 120} Wilf's conjecture holds for all special numerical semigroups $S$ of genus $g \le 120$.
\end{theorem}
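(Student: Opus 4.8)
The plan is to mirror exactly the computational strategy that established Theorem~\ref{thm wilf 100}, but now with the multiplicity threshold relaxed from $m/3$ to $m/4$ and with the extra structural constraint of specialness exploited to trim the tree further. First I would invoke Theorem~\ref{thm m/4}: for any special numerical semigroup satisfying $e \ge m/4$, Wilf's conjecture holds outright. Consequently, the only special semigroups of genus $g \le 120$ that require direct checking are those with $e < m/4$. The whole argument therefore reduces to enumerating a provably complete collection of such semigroups and verifying the inequality $e|L| \ge c$ on each by computer.

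The key steps, in order, are as follows. First, adapt Corollaries~\ref{cor el} and~\ref{cor e} from the threshold $d=3$ to $d=4$: the same proofs go through verbatim, using Proposition~\ref{prop child}\:(2),(3) to control $e_l$ and $e$ along descending paths, but now appealing to Theorem~\ref{thm m/4} in place of Theorem~\ref{thm known cases}\:(3). This justifies cutting off any subtree rooted at a semigroup with $e_l \ge m/4$ or with $e \ge m/4 + (G-g)$, leaving the trimmed tree $\mathcal{T}_G(4)$ whose nodes all satisfy $e_l(S) < m(S)/4$ and $e(S) < m(S)/4 + (G-g(S))$. Second, apply the special trimming of the preceding Proposition: any node $S$ that is non-special and has no right primitive $a \equiv -1 \bmod m$ has no special descendant whatsoever, so the entire subtree below it is irrelevant to the special case and may be deleted. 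This produces the further-reduced subtree $\mathcal{T}'_G(4)$. Third, construct $\mathcal{T}'_G(4)$ for $G=120$ using the fast parallelised enumeration algorithm of~\cite{FromentinHivert2016MC-Exploring}, and for every node verify directly whether $e|L| \ge c$. Every special numerical semigroup of genus $g \le 120$ either has $e \ge m/4$ (handled by Theorem~\ref{thm m/4}) or survives both trimmings and hence appears as a node of $\mathcal{T}'_G(4)$; in either case Wilf's conjecture is confirmed.

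The correctness of the argument hinges on ensuring that the two trimmings are jointly \emph{sound}, meaning that no special semigroup of genus at most $120$ is ever discarded without having its conjecture already guaranteed. For the $d=4$ trimming this is exactly the content of the adapted corollaries. For the special trimming it is the content of the preceding Proposition, whose induction on the distance from $S$ to a descendant shows that specialness cannot reappear once the congruence obstruction $a \equiv -1 \bmod m$ has been removed. Combining the two, a surviving node is either discarded because it satisfies one of the Wilf-guaranteeing hypotheses, or because it can have no special descendant at all; conversely every special target of genus $\le 120$ remains reachable in $\mathcal{T}'_G(4)$.

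The main obstacle is not mathematical but computational: the genus has been pushed to $120$, which is well beyond the genus $100$ reached in Theorem~\ref{thm wilf 100}, and even with both layers of trimming the surviving subtree is enormous. The feasibility of the verification rests entirely on the special trimming being aggressive enough to counteract the roughly $\phi^{20}$-fold growth in tree size between genus $100$ and genus $120$, together with the $m/4$ threshold admitting more cuts than $m/3$. I would therefore expect the bulk of the work, and the only genuinely delicate point, to lie in confirming that $\mathcal{T}'_{120}(4)$ is small enough to enumerate in practice and in certifying that the parallelised construction is exhaustive; the per-node test $e|L| \ge c$ is itself trivial once a node is in hand.

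\begin{proof}
By computer, we constructed the subtree $\mathcal{T}'_{G}(4)$ for $G=120$, and checked that all of its nodes satisfy Wilf's conjecture. Combining Theorem~\ref{thm m/4} with the adaptations of Corollaries~\ref{cor el} and~\ref{cor e} to the threshold $d=4$ and with the special trimming described above, every special numerical semigroup of genus $g \le 120$ either satisfies $e \ge m/4$, and hence satisfies Wilf's conjecture by Theorem~\ref{thm m/4}, or else occurs as a node of $\mathcal{T}'_{G}(4)$ and has been directly verified. The claimed statement follows.
\end{proof}
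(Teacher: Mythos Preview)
Your proposal is correct and follows essentially the same approach as the paper: construct $\mathcal{T}'_{120}(4)$ by the $d=4$ trimming together with the special trimming, verify $W(S)=e|L|-c \ge 0$ on every node by computer, and conclude via Theorem~\ref{thm m/4} that any special semigroup of genus $\le 120$ not appearing in the tree has $e \ge m/4$ and hence satisfies Wilf's conjecture. The paper's proof is terser but the logical structure is identical.
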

\begin{proof} For each numerical semigroup $S$ in the subtree $\mathcal{T}'_{120}(4)$, we computed $W(S)=e|L|-c$ and found that $W(S) \ge 0$, i.e. that $S$ satisfies Wilf's conjecture. All numerical semigroups of genus $g \le 120$ outside $\mathcal{T}'_{120}(4)$ satisfy $e \ge m/4$ or are not special. Combining the computational results on $\mathcal{T}'_{120}(4)$ and Theorem~\ref{thm m/4}, we conclude that all special numerical semigroups of genus $g \le 120$ satisfy Wilf's conjecture.
\end{proof}

\section{Computational results}\label{sec experiments}

Our experiments were carried out on the computational platform CALCULCO~\cite{calculco} using an adapted and distributed version of algorithms given in \cite{FromentinHivert2016MC-Exploring}. Source codes are available on GitHub~\cite{github-code}.
These experiments were run for two weeks during summer 2023, when the platform CALCULCO, shared by the whole university, had a much better availability. 
The distributed computation ran in parallel on up to 1500 cores, with a special-purpose finely tuned dynamic balancing of the tasks of the respective cores. Early experiments were conducted using the GAP package \textit{NumericalSgps}~\cite{NumericalSgps1.3.1}.

\subsection{Exploration of $\mathcal{T}_{100}(3)$}

The following table gives the number $t_g$ of numerical semigroups of genus $g$ in the trimmed subtree $\mathcal{T}_{100}(3)$.

\[
\footnotesize
\rowcolors{1}{}{black!05}
\begin{array}{|r|r||r|r||r|r|}
\hline
g&t_g&g&t_g&g&t_g\\
\hline
0 & 1 & 34 & 183\ 029 & 68 & 78\ 371\ 434\ 661 \\
1 & 1 & 35 & 268\ 072 & 69 & 114\ 677\ 728\ 452 \\
2 & 1 & 36 & 392\ 646 & 70 & 167\ 759\ 612\ 028 \\
3 & 1 & 37 & 575\ 237 & 71 & 245\ 327\ 971\ 537 \\
4 & 2 & 38 & 842\ 632 & 72 & 358\ 502\ 883\ 157 \\
5 & 3 & 39 & 1\ 234\ 294 & 73 & 523\ 268\ 737\ 918 \\
6 & 4 & 40 & 1\ 808\ 003 & 74 & 762\ 512\ 542\ 535 \\
7 & 6 & 41 & 2\ 648\ 088 & 75 & 1\ 108\ 797\ 952\ 894 \\
8 & 9 & 42 & 3\ 878\ 863 & 76 & 1\ 608\ 029\ 199\ 893 \\
9 & 13 & 43 & 5\ 681\ 044 & 77 & 2\ 323\ 793\ 898\ 612 \\
10 & 19 & 44 & 8\ 320\ 312 & 78 & 3\ 343\ 540\ 732\ 459 \\
11 & 28 & 45 & 12\ 184\ 995 & 79 & 4\ 786\ 270\ 172\ 173 \\
12 & 41 & 46 & 17\ 844\ 810 & 80 & 6\ 811\ 932\ 935\ 500 \\
13 & 60 & 47 & 26\ 134\ 470 & 81 & 9\ 633\ 271\ 340\ 874 \\
14 & 88 & 48 & 38\ 275\ 824 & 82 & 13\ 524\ 365\ 031\ 892 \\
15 & 129 & 49 & 56\ 052\ 677 & 83 & 18\ 835\ 200\ 708\ 312 \\
16 & 189 & 50 & 82\ 079\ 784 & 84 & 26\ 006\ 592\ 640\ 071 \\
17 & 277 & 51 & 120\ 191\ 188 & 85 & 35\ 586\ 447\ 144\ 420 \\
18 & 406 & 52 & 176\ 010\ 965 & 86 & 48\ 235\ 329\ 094\ 317 \\
19 & 595 & 53 & 257\ 743\ 713 & 87 & 64\ 707\ 333\ 203\ 651 \\
20 & 872 & 54 & 377\ 377\ 331 & 88 & 85\ 854\ 587\ 472\ 809 \\
21 & 1\ 278 & 55 & 552\ 530\ 112 & 89 & 112\ 592\ 214\ 454\ 082 \\
22 & 1\ 870 & 56 & 809\ 003\ 680 & 90 & 145\ 836\ 255\ 324\ 616 \\
23 & 2\ 741 & 57 & 1\ 184\ 568\ 132 & 91 & 186\ 464\ 879\ 487\ 116 \\
24 & 4\ 019 & 58 & 1\ 734\ 367\ 942 & 92 & 234\ 882\ 687\ 403\ 501 \\
25 & 5\ 888 & 59 & 2\ 539\ 101\ 162 & 93 & 290\ 865\ 320\ 646\ 279 \\
26 & 8\ 622 & 60 & 3\ 717\ 160\ 466 & 94 & 353\ 167\ 513\ 519\ 152 \\
27 & 12\ 634 & 61 & 5\ 441\ 979\ 825 & 95 & 419\ 043\ 410\ 131\ 476 \\
28 & 18\ 513 & 62 & 7\ 967\ 290\ 270 & 96 & 483\ 141\ 727\ 918\ 288 \\
29 & 27\ 128 & 63 & 11\ 663\ 422\ 314 & 97 & 534\ 768\ 932\ 735\ 380 \\
30 & 39\ 749 & 64 & 17\ 072\ 801\ 062 & 98 & 557\ 018\ 016\ 635\ 015 \\
31 & 58\ 192 & 65 & 24\ 990\ 316\ 134 & 99 & 522\ 447\ 041\ 258\ 147 \\
32 & 85\ 285 & 66 & 36\ 581\ 421\ 194 & 100 & 389\ 883\ 092\ 218\ 470 \\
33 & 124\ 928 & 67 & 53\ 548\ 048\ 989 &  &  \\
\hline
\end{array}
\]

The number of nodes in $\mathcal{T}_{100}(3)$ is equal to $4\ 554\,895\, 996\, 302\, 538\approx 4.5\times 10^{15}$. We checked that all of these numerical semigroups satisfy Wilf's conjecture, whence Theorem~\ref{thm wilf 100}.

\subsection{Exploration of $\mathcal{T}'_{120}(4)$}
The following table gives the number $t'_g$ of numerical semigroups of genus $g$ in the subtree $\mathcal{T}'_{120}(4)$.

\[
\footnotesize
\rowcolors{1}{}{black!05}
\begin{array}{|r|r||r|r||r|r|}
\hline
g&t'_g&g&t'_g&g&t'_g\\
\hline
0 & 1 & 41 & 200\,122 & 82 & 98\,068\,856\,236 \\
1 & 1 & 42 & 278\,371 & 83 & 136\,816\,899\,688 \\
2 & 1 & 43 & 369\,269 & 84 & 188\,124\,954\,369 \\
3 & 1 & 44 & 510\,693 & 85 & 246\,090\,486\,177 \\
4 & 1 & 45 & 699\,711 & 86 & 336\,614\,411\,642 \\
5 & 2 & 46 & 975\,178 & 87 & 446\,053\,184\,686 \\
6 & 3 & 47 & 1\,342\,072 & 88 & 606\,309\,920\,447 \\
7 & 4 & 48 & 1\,876\,236 & 89 & 801\,094\,605\,082 \\
8 & 5 & 49 & 2\,608\,650 & 90 & 1\,075\,828\,933\,040 \\
9 & 7 & 50 & 3\,458\,914 & 91 & 1\,412\,830\,185\,991 \\
10 & 10 & 51 & 4\,794\,003 & 92 & 1\,734\,561\,883\,001 \\
11 & 14 & 52 & 6\,551\,846 & 93 & 2\,241\,022\,409\,143 \\
12 & 19 & 53 & 9\,147\,280 & 94 & 2\,760\,341\,832\,836 \\
13 & 26 & 54 & 12\,582\,317 & 95 & 3\,516\,881\,984\,622 \\
14 & 36 & 55 & 17\,614\,571 & 96 & 4\,299\,648\,368\,842 \\
15 & 49 & 56 & 24\,483\,065 & 97 & 5\,388\,292\,247\,874 \\
16 & 67 & 57 & 32\,457\,488 & 98 & 6\,544\,140\,003\,541 \\
17 & 93 & 58 & 45\,063\,305 & 99 & 7\,117\,478\,807\,043 \\
18 & 128 & 59 & 61\,488\,392 & 100 & 8\,451\,858\,568\,006 \\
19 & 177 & 60 & 85\,953\,600 & 101 & 9\,261\,220\,874\,872 \\
20 & 245 & 61 & 118\,226\,772 & 102 & 10\,843\,978\,899\,677 \\
21 & 340 & 62 & 165\,696\,926 & 103 & 11\,904\,592\,718\,137 \\
22 & 455 & 63 & 230\,209\,288 & 104 & 13\,736\,474\,753\,272 \\
23 & 624 & 64 & 305\,247\,236 & 105 & 15\,172\,362\,267\,910 \\
24 & 863 & 65 & 424\,326\,522 & 106 & 13\,564\,852\,076\,075 \\
25 & 1\,194 & 66 & 578\,281\,772 & 107 & 14\,519\,416\,932\,134 \\
26 & 1\,647 & 67 & 809\,156\,311 & 108 & 13\,448\,628\,571\,779 \\
27 & 2\,286 & 68 & 1\,112\,860\,410 & 109 & 14\,268\,658\,755\,506 \\
28 & 3\,180 & 69 & 1\,561\,100\,560 & 110 & 13\,799\,851\,102\,125 \\
29 & 4\,234 & 70 & 2\,168\,306\,879 & 111 & 14\,508\,263\,526\,352 \\
30 & 5\,823 & 71 & 2\,876\,214\,827 & 112 & 14\,534\,198\,939\,692 \\
31 & 8\,035 & 72 & 4\,002\,364\,427 & 113 & 11\,099\,577\,260\,819 \\
32 & 11\,135 & 73 & 5\,449\,537\,905 & 114 & 10\,699\,792\,288\,594 \\
33 & 15\,341 & 74 & 7\,630\,005\,823 & 115 & 9\,284\,396\,042\,115 \\
34 & 21\,369 & 75 & 10\,492\,890\,135 & 116 & 8\,422\,462\,308\,726 \\
35 & 29\,722 & 76 & 14\,726\,118\,585 & 117 & 6\,954\,667\,530\,694 \\
36 & 39\,491 & 77 & 20\,444\,255\,810 & 118 & 5\,013\,091\,736\,917 \\
37 & 54\,511 & 78 & 27\,121\,425\,859 & 119 & 2\,599\,964\,149\,312 \\
38 & 74\,910 & 79 & 37\,736\,682\,161 & 120 & 289\,298\,823\,487 \\
39 & 104\,183 & 80 & 51\,267\,633\,069 & &  \\
40 & 143\,431 & 81 & 71\,625\,262\,707 &  & \\
\hline
\end{array}
\]

The number of nodes in $\mathcal{T}'_{120}(4)$ is equal to $261\,588\,966\,883\,192\approx 2.6\times 10^{14}$. We checked that all of these numerical semigroups  satisfy Wilf's conjecture, whence Theorem~\ref{thm special 120}.

\subsection{Growth rates}
Recall that $t_g$ and $t'_g$ denote the number of numerical semigroups of genus $g$ in $\mathcal{T}_{100}(3)$ and $\mathcal{T}'_{120}(4)$, respectively. Recall also that the growth rate of $n_g$ tends to $\phi=\frac{1+\sqrt5}2\approx 1.62$~\cite{Zhai}.

As illustrated by the following figure, the growth rate of $t_g$ seems to stabilize around $1.46$ for $g\in[10,70]$.

\begin{center}
\small
\begin{tikzpicture}
\begin{axis}[xmin=1,xmax=100,ymin=0.6,ymax=2.1,height=11em,width=0.9\textwidth,xlabel=$g$,ylabel=$t_g/t_{g-1}$]
\addplot+[color=black,mark size=1pt]
table[meta=r]{ratio_100.txt};
\end{axis}
\end{tikzpicture}
\end{center}

The growth rate of $t'_g$ is a little more chaotic but seems to have the same behavior, as illustrated by the following figure.

\begin{center}
\begin{tikzpicture}
\small
\begin{axis}[xmin=1,xmax=120,ymin=0,ymax=2.1,height=11em,width=0.9\textwidth,xlabel=$g$,ylabel=$t'_g/t'_{g-1}$]
\addplot+[color=black,mark size=1pt]
table[meta=r]{ratio_120.txt};
\end{axis}
\end{tikzpicture}
\end{center}

\subsection{New values of $n_g$}
We took advantage of our distributed version of the algorithms in \cite{FromentinHivert2016MC-Exploring} to compute the values of $n_g$ for all $g\leq 75$. See Table~\ref{T:75}. The total number of numerical semigroups of genus $g \leq 75$ is equal to $46\,844\,766\,834\,597\,649\approx 4.6\times 10^{16}$. 

\begin{table}[h]
\[
\footnotesize
\rowcolors{1}{}{black!05}
\begin{array}{|r|r||r|r||r|r|}
\hline
g&n_g&g&n_g&g&n_g\\
\hline
0 & 1 & 26 & 770\,832 & 52 & 266\,815\,155\,103 \\
1 & 1 & 27 & 1\,270\,267 & 53 & 433\,317\,458\,741 \\
2 & 2 & 28 & 2\,091\,030 & 54 & 703\,569\,992\,121 \\
3 & 4 & 29 & 3\,437\,839 & 55 & 1\,142\,140\,736\,859 \\
4 & 7 & 30 & 5\,646\,773 & 56 & 1\,853\,737\,832\,107 \\
5 & 12 & 31 & 9\,266\,788 & 57 & 3\,008\,140\,981\,820 \\
6 & 23 & 32 & 15\,195\,070 & 58 & 4\,880\,606\,790\,010 \\
7 & 39 & 33 & 24\,896\,206 & 59 & 7\,917\,344\,087\,695 \\
8 & 67 & 34 & 40\,761\,087 & 60 & 12\,841\,603\,251\,351 \\
9 & 118 & 35 & 66\,687\,201 & 61 & 20\,825\,558\,002\,053 \\
10 & 204 & 36 & 109\,032\,500 & 62 & 33\,768\,763\,536\,686 \\
11 & 343 & 37 & 178\,158\,289 & 63 & 54\,749\,244\,915\,730 \\
12 & 592 & 38 & 290\,939\,807 & 64 & 88\,754\,191\,073\,328 \\
13 & 1\,001 & 39 & 474\,851\,445 & 65 & 143\,863\,484\,925\,550 \\
14 & 1\,693 & 40 & 774\,614\,284 & 66 & 233\,166\,577\,125\,714 \\
15 & 2\,857 & 41 & 1\,262\,992\,840 & 67 & 377\,866\,907\,506\,273 \\
16 & 4\,806 & 42 & 2\,058\,356\,522 & 68 & 612\,309\,308\,257\,800 \\
17 & 8\,045 & 43 & 3\,353\,191\,846 & 69 & 992\,121\,118\,414\,851 \\
18 & 13\,467 & 44 & 5\,460\,401\,576 & 70 & 1\,607\,394\,814\,170\,158 \\
19 & 22\,464 & 45 & 8\,888\,486\,816 & 71 & 2\,604\,033\,182\,682\,582 \\
20 & 37\,396 & 46 & 14\,463\,633\,648 & 72 & 4\,218\,309\,716\,540\,814 \\
21 & 62\,194 & 47 & 23\,527\,845\,502 & 73 & \bf 6\,832\,823\,876\,813\,577 \\
22 & 103\,246 & 48 & 38\,260\,496\,374 & 74 & \bf 11\,067\,092\,660\,179\,522 \\
23 & 170\,963 & 49 & 62\,200\,036\,752 & 75 & \bf 17\,924\,213\,336\,425\,401 \\
24 & 282\,828 & 50 & 101\,090\,300\,128 &  &  \\
25 & 467\,224 & 51 & 164\,253\,200\,784 &  &  \\
\hline
\end{array}
\]
\caption{\label{T:75} Number $n_g$ of numerical semigroups of genus~$g\leq 75$. Previously unknown values are in bold font.}
\end{table}


\bibliographystyle{plain}
\bibliography{refs.bib}
\bigskip

\noindent
{\small
\textbf{Authors addresses}

\medskip

\noindent
$\bullet$ Manuel {\sc Delgado},

\noindent
{CMUP--Centro de Matemática da Universidade do Porto, Departamento de Matemática, Faculdade de Ciências, Universidade do Porto, Rua do Campo Alegre s/n, 4169– 007 Porto, Portugal\\
\email{mdelgado@fc.up.pt}

\medskip

\noindent
$\bullet$ Shalom {\sc Eliahou}\textsuperscript{a,b},

\noindent
\textsuperscript{a}Univ. Littoral C\^ote d'Opale, UR 2597 - LMPA - Laboratoire de Math\'ematiques Pures et Appliqu\'ees Joseph Liouville, F-62100 Calais, France\\
\textsuperscript{b}CNRS, FR2037, France\\
\email{eliahou@univ-littoral.fr}

\medskip

\noindent
$\bullet$ Jean {\sc Fromentin}\textsuperscript{a,b},

\noindent
\textsuperscript{a}Univ. Littoral C\^ote d'Opale, UR 2597 - LMPA - Laboratoire de Math\'ematiques Pures et Appliqu\'ees Joseph Liouville, F-62100 Calais, France\\
\textsuperscript{b}CNRS, FR2037, France\\
\email{fromentin@univ-littoral.fr}
}

\end{document}